\newcommand{\C}{\mathbb{C}}
\newcommand{\Z}{\mathbb{Z}}
\newcommand{\cV}{\mathcal{V}}
\newcommand{\cE}{\mathcal{E}}
\newcommand{\lang}{\left\langle}
\newcommand{\rang}{\right\rangle}
\newcommand{\Om}{\Omega}
\newcommand{\cU}{\mathcal{U}}
\DeclareMathOperator{\Hom}{Hom}
\newtheorem*{lemmaa}{Lemma A}
\newtheorem{thm}{Theorem}
\newtheorem{lemma}{Lemma}
\newtheorem{cor}[lemma]{Corollary}
\begin{document}

\definecolor{chalkboard}{rgb}{0,.15,.15}

\bibliographystyle{plain}

\title{Localization and a generalization of MacDonald's inner product}
\author{Erik Carlsson}

\maketitle
\abstract{
We find a limit formula for a generalization
of MacDonald's inner product in finitely many variables,
using equivariant localization on the Grassmannian
variety, and the main lemma from \cite{Car},
which bounds the torus characters of the higher \c{C}ech
cohomology groups.
We show that the MacDonald inner product conjecture of
type $A$ follows from a special case, and
the Pieri rules section of MacDonald's book \cite{Mac},
making this limit suitable replacement for the norm squared of one,
the usual normalizing constant.

}

\section{Introduction}

The MacDonald inner product formula of type $A$
determines the norm squared of the MacDonald
polynomial under the finite variable MacDonald inner product.
It can be obtained up to a constant using
the infinite variable theory, namely the Pieri rules section of MacDonald's
book. However, the normalizing constant, usually taken 
to be the norm squared of one, is difficult to find and is
equivalent to Andrews' $q$-Dyson conjecture. It has been been calculated 
from different angles by a number of authors as 
well as its deep extension to other root systems, see
\cite{BZ,Cher,EtKir,Gar1,GarGon,Kad,Mac1,Mac2,Mim,Stem}.

In this paper, we replace the normalizing constant 
by a limit of norm squares, which we compute for a 
more general inner product.
In section \ref{ipsec}, we deduce the inner product formula
from theorem \ref{mainthm}, in the special case of the MacDonald
inner product. This section assumes results about 
MacDonald polynomials in infinitely many variables from
Chapter VI of MacDonald's book, which are in fact the same rules used
to extract the inner product formula from the constant term formula.


The most interesting feature of this formula 
is that it comes from a geometric argument.
We first identify the inner product with the character
of the space of global sections of a virtual bundle on a
torus equivariant Grassmannian variety.
The main lemma of \cite{Car} shows 
that the higher \c{C}ech cohomology groups do not contribute 
when the bundle is twisted by a high power of the ample
line bundle, so that the character of the space of global sections agrees with
the equivariant Euler characteristic. 
We then use equivariant localization to find a formula for the
Euler characteristic as a sum of factored rational functions.
When the bundle is twisted by a high power of the ample line bundle,
one term dominates the sum, leading to the desired formula.


\section{Plethysm}

Given a smooth complex projective variety $X$, let $K(X)$ denote
its $K$-theory group over the complex numbers. We have the standard
$\lambda$-ring operations defined on bundles by
$\lambda^i([\cV]) = \left[\Lambda^i\cV\right]$, where $[\cV]$ is the
induced class in $K$-theory.
They also act on monomials in every variable in this
paper by
\[\lambda^0(x)=1,\quad \lambda^1(x)=x,\quad 
\lambda^i(x)= 0,\quad i\geq 2.\]
The generating function
\[\lambda(wA) = \sum_{i\geq 0} (-w)^i \lambda^i(A)\]
is the expansion about zero of a rational function of $w$. 
We define $\lambda(A)$ to be the analytic
continuation of this function to $w=1$, if it exists.
For instance, if $x^I$ are formal monomials, we have
\[\lambda\left(\sum_{I} a_I x^I\right) = 
\prod_{I} (1-x^I)^{a_I},\quad
a_I \in \Z.\]
We also have the dualizing operation $[E]^*=[E^*]$, which
acts on monomials by $x^* = x^{-1}$.

Let $\Lambda_n$ denote the graded ring of 
symmetric polynomials in $n$ variables
over $\C$ with inverse limit $\Lambda$,
and let $M$ be a formal 
power series with integer coefficients and constant term $1$ 
in some set of variables.
We define a general MacDonald type inner product on $\Lambda_n$ by
\[(f,g)'_{M} = \lang f(x)g(x^*) \Delta_{M}(x) \rang,\quad
x=\{x_1,...,x_n\}\]
\begin{equation}
\label{sp}
\Delta_M = \lambda\left(M \sum_{i \neq j} x_i x_j^{-1}\right),\quad
\lang F(x)\rang = \frac{1}{n!} [x]_1 F(x).
\end{equation}
Here $[x]_1$ denotes the constant term, which may be extracted
termwise from the power series expansion of $\Delta_M(x)$ in the
indeterminants of $M$, or by assigning those variables values of
norm smaller than one, and defining
\begin{equation}
\label{contour}
[x]_1 = \int_{|x_1|=r} \frac{dx_1}{x_1}\cdots
\int_{|x_1|=r} \frac{dx_n}{x_n},\quad r>1.
\end{equation}
For instance, $M=1$ produces the usual Hall inner product,
while 
\[(f,g)'_{q,t} = (f,g)'_{M},\quad M = \frac{1-t}{1-q}=
(1-t)(1+q+q^2+\cdots),\]
is the finite variable MacDonald inner product
as in \cite{Mac}.

In this paper, we will denote a representation of a one-dimensional complex
torus $\C^*=\{z\}$ and its character in $\Z[z^{\pm 1}]$ by the same letter. 
The $\lambda$-ring notation allows us to define an 
evaluation homomorphism $f \mapsto f(A)$ for any virtual character $A$
by its action on generators
\[\Lambda \rightarrow \Z[z^{\pm 1}],\quad
e_i \mapsto \lambda^i(A),\quad A \in \Z[z^{\pm 1}],\]
where $e_i$ are the elementary symmetric functions.
Let us set
\[\Om(A) = \lambda(Ax)^{-1} = \sum_{\mu} u_\mu(A) v_\mu\]
for any dual bases $u_\mu,v_\mu$ of $\Lambda$. In particular,
$\Om = \Om(1)$ is the sum of the complete symmetric polynomials.
We also extend the dimension map to any virtual character
$A \in \Z[z^{\pm 1}]$ by
\[\dim_a(A) = [z^a] A,\quad a \in \Z,\quad \dim(A) = \sum_a \dim_a(A),\]
where $[z^a]$ denotes the coefficient of $z^a$.

\section{The main theorem}

Given a one dimensional complex torus $T = \{z\}$
and a finite dimensional representation $Z$, let
$X$ denote the Grassmannian of subspaces of $Z$
of codimension $n$. We label a point in this variety
by its $n$-dimensional quotient space $U=Z/V$, rather than 
the space itself. Let $\cU$ denote the $n$-dimensional 
tautological quotient bundle on $X$ whose fiber over 
$U \in X$ is $U$ itself. Let $K_T(X)$ denote the equivariant
$K$-theory group with respect to the induced action of $T$ on $X$.

The facts we will need about this variety are summarized below.

\begin{enumerate}
\item We have an algebra homomorphism defined on generators by
\[\Lambda_n \otimes \Lambda_n \rightarrow K_T(X),\quad
e_i \otimes e_j \mapsto \Lambda^i(\cU) \otimes \Lambda^j(\cU)^*,\]
where $e_i$ is the $i$th elementary symmetric function.
We denote the image of $f\otimes g$ by $f(\cU)g(\cU^*)$.

\item We have a linear map
$\chi^0 : \Lambda_n \otimes \Lambda_n \rightarrow \C[t^{\pm 1}]$ 
defined on Schur positive elements by
\[\chi_{X}^0(f(\cU)g(\cU^*)) = H^0_{X}(f(\cU)g(\cU^*)) \in \Z[T].\]
%
The following formula follows from Borel-Weil-Bott
and also \cite{Ed}.
%
\begin{equation}
\label{bwb}
\chi^0_{X}(f(\cU)g(\cU^*)) = 
\lang f(x)g(x^*)\lambda(x^*Z)^{-1}\Delta(x)\rang.
\end{equation}
The fractional term refers to its expansion
about $x=\infty$.
%
\item We also have the equivariant Euler characteristic
\[\chi : K_T(X) \rightarrow \C[t^{\pm 1}],\quad
\chi([\cE]) = \sum_{i\geq 0} (-1)^i H^i_X(\cE).\]
%
If the torus fixed points $U \in X^T$ are isolated, then they are
indexed by the $n$ element subsets of the weight set. 
The character of the cotangent space to such a point is 
%
\[T^*_U X = \Hom(V,U)^* = U^*V.\]
%
The $K$-theoretic localization formula in this context says that
\begin{equation}
\label{loc}
\chi(f(\cU)g(\cU^*)) =
\sum_{U \in X^T} f(U)g(U^*) \lambda(U^*V)^{-1}.
\end{equation}
See \cite{Ginz} for a reference.
\end{enumerate}

Now let $A,B,C$ be power series in $\Z((z))$, and define
\[\cE_{\cU} = A \cU+B\cU^*+C\cU\cU^* \in K_T(X) \otimes \C[[z]].\]
The following is the main lemma from \cite{Car}.

\begin{lemmaa}
Suppose the following conditions hold:
\begin{enumerate}[a)]
\item $A \in \Z_{\geq 0}[z^{\pm 1}]$.
\item $B \in \Z_{\geq 0}((z))$, and
$\dim_a(B) \leq p(a)$ for some polynomial $p(a)$.
\item $C \in (z)\Z[z]$ is a polynomial with constant term zero.
\item For any weights $a,b\in \Z$ with
$\dim_{a}(Z)>0$, $\dim_b(C) <0$,
we have 
\[\dim_{a+b}(B) \geq \dim_a(Z)-\dim_b(C)-1.\]
\end{enumerate}
%
Then we have that
\begin{equation}
\label{lemmaaeq}
\sum_{i\geq 0}(-w)^i\left(\chi-\chi^0\right) 
\left(\det(\cU)^m \lambda^i\left(\cE_{\cU}\right)\right) \in \C((z))[[w]]
\end{equation}
is the $w$-expansion of a meromorphic function $c_{m}(z,w)$, 
and the leading exponent of the expansion of 
$c_{m}(z,w)$ in the $z$ direction is bounded below by
$mk+c$, where $k$ is the largest torus weight of $Z$, 
and $c$ is some constant.
\end{lemmaa}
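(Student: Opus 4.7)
The plan is to bring both $\chi$ and $\chi^0$ to a common residue-theoretic form using \eqref{loc} and \eqref{bwb}, and to estimate their difference summand-by-summand from conditions (a)--(d).

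First I would use the multiplicativity of $\lambda$ to write
\[\sum_{i \geq 0} (-w)^i \lambda^i(\cE_\cU) = \lambda(wA\cU)\lambda(wB\cU^*)\lambda(wC\cU\cU^*),\]
and, after multiplying by $\det(\cU)^m$ and applying \eqref{loc}, I would obtain
\[\chi = \sum_{U \in X^T} \det(U)^m \, \frac{\lambda(wAU)\lambda(wBU^*)\lambda(wCUU^*)}{\lambda(U^*V)},\qquad V = Z - U,\]
which is manifestly a finite sum of rational functions of $w$, hence meromorphic. From \eqref{bwb} the companion expression
\[\chi^0 = \lang \det(x)^m \lambda(wAx)\lambda(wBx^*)\lambda(wCxx^*)\lambda(x^*Z)^{-1}\Delta(x) \rang\]
is an iterated contour integral of the same integrand.

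Next I would collapse each $x$-contour from $|x_i|>1$ inward and pick up residues. The poles at $x_i = z^a$ for $a$ in the support of $Z$ come from $\lambda(x^*Z)^{-1}$; after the antisymmetrization built into the Vandermonde part of $\Delta(x)$, these residues reassemble into exactly the localization sum for $\chi$, and so cancel in the difference. What remains is a sum of residues at ``collision'' poles $x_i = x_j z^d$ arising from the $C$-contribution to $\Delta(x)$, together with any residues at $0$ or $\infty$ produced when rewriting $\lambda(wBx^*)$ as a genuine meromorphic function of $w$.

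The heart of the argument is then to bound the $z$-leading exponent of each surviving residue by $mk+c$. Condition (a) makes $\lambda(wAx)$ a Laurent polynomial in $x$ of controlled $z$-degree, and condition (b) ensures that $\lambda(wBx^*)$ has only polynomially-growing coefficients, contributing a finite shift per variable. Condition (c) bounds the depth of nested $C$-collisions. The key positivity comes from (d): at each collision $x_i \to x_j z^d$ that would create a pole of $z$-order $\dim_a(Z) - \dim_b(C) - 1$, there are enough weights of $B$ in degree $a+b$ to cancel it, leaving a non-negative net $z$-order. Summed across all such substitutions and all placements of the remaining variables at weights of $Z$, the total $z$-leading exponent is at least $mk$ (from sending one variable to the top weight $z^k$ via $\det(\cU)^m$) plus an $m$-independent constant $c$.

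The main obstacle is the combinatorial bookkeeping at nested collisions: one substitution $x_i \to x_j z^d$ can trigger further collisions among the remaining variables, producing chains that must all be controlled simultaneously. The cleanest route is probably an induction on the number of collisions, invoking (d) at each step to absorb each new pole into surplus $B$-weights, so that the bound $mk+c$ emerges uniformly in $m$.
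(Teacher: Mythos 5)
First, a point of comparison: the paper itself does not prove Lemma A --- it is imported verbatim from \cite{Car}, where it is established by bounding the torus characters of the higher \c{C}ech cohomology groups for the standard affine cover of the Grassmannian by fixed-point charts, twisted by $\det(\cU)^m$. So your contour-deformation plan is in any case a different route from the cited source, and it has to be judged on its own.

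Judged that way, there is a genuine gap: the two steps where all the work lies are asserted rather than carried out, and the accounting you give for them is not right. (i) Collapsing the contours does not split cleanly into ``residues at $x_i=z^a$ $=$ localization sum'' plus ``collision residues'': the problematic terms are the mixed configurations in which some variables sit at weights of $Z$ while others collide with them through the $C$-factors $(1-wz^bx_ix_j^{-1})^{\dim_b C}$, whose poles moreover move with $w$; these are neither pure localization terms nor obviously negligible, and they are exactly where hypothesis (d) must act. (ii) The quantitative claim --- that every surviving configuration has $z$-order at least $mk+c$ with $k$ the \emph{largest} weight of $Z$ --- is the real content of the lemma, and your justification (``sending one variable to the top weight $z^k$ via $\det(\cU)^m$'') begs the question: nothing forces a surviving residue to place a variable at $z^k$, and configurations with $\sum_i(\text{weight of }x_i)<k$ must be shown to cancel. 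The correct role of (d) is a pole-order count of this cancellation: a variable pinned at $x_j=z^a$ meets a pole of order $\dim_a(Z)$, a colliding $C$-factor adds order $-\dim_b(C)$, one order is consumed by taking the residue, and the remaining $\dim_a(Z)-\dim_b(C)-1$ orders must be absorbed by zeros of $\lambda(wBx^*)$ at weight $a+b$; your reading of (d) as giving ``a non-negative net $z$-order per collision'' matches neither this count nor explains how it produces the global bound $mk+c$ uniformly in $m$ (likewise, (c) alone does not bound the ``depth'' of collision chains --- only the number of variables does). Finally, the bound must be proved for the meromorphic continuation $c_m(z,w)$ and not termwise in $w$ --- the remark after the lemma stresses that the $w$-coefficients individually have large \emph{negative} $z$-exponents --- so the interchange of the $w$- and $z$-expansions has to be handled explicitly, which your sketch does not do. As it stands the proposal is a plausible program, not a proof.
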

%

\emph{Remark.} This does not mean that the coefficient 
$[w^i]c_m(z,w)$ has high leading degree. 
In fact, Serre duality implies
that these terms should contribute large negative powers of $z$,
and would therefore dominate the power series.

Since $\Lambda$ is spanned by expressions of the form
$\lambda(Ax)$ for a general $A$, we may insert a factor
of $f(\cU)$ into \eqref{lemmaaeq}, for arbitrary $f \in \Lambda$.
Then apply \eqref{bwb} and \eqref{loc} to get
\[c_{m}(z,w)=\lang e_n^m f(x) 
\lambda\left(w\cE_x\right)\lambda(Zx^*)^{-1}\Delta(x)\rang-\]
\begin{equation}
\label{lemmaa1}
\sum_{U \in X^T} e_n^mf(U)\lambda
\left(w\cE_U\right)
\lambda(T^*_U X)^{-1}.
\end{equation}

\begin{thm}
\label{mainthm}
Let $M \in \Z[z]$ be a polynomial with constant term $1$,
and let
\[E = MU_n U_n^*-U_n^*,\quad U_n = 1+\cdots+z^{n-1}.\]
Suppose that $\dim_a(M)\geq -1$ for all $a \in \Z$, and 
$\dim_0(E)=0$, so that the right hand side below is defined and
nonzero. Then
\begin{equation}
\label{thmeq}
\lim_{m \rightarrow \infty} z^{-m{n \choose 2}}(fe_n^m,\Omega)'_{M}=
\lambda(1-M)^nf(U_n)
\lambda\left(E\right).
\end{equation}
%
%
\end{thm}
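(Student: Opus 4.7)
The plan is to specialize the apparatus of equation~\eqref{lemmaa1} to the degenerate case where the Grassmannian reduces to a point. Take $Z = U_n$, so that $X = \Gr(n, U_n)$ is a single reduced point with unique torus fixed point $U = U_n$ and vanishing cotangent space $T^*_U X = U_n^*(Z - U_n) = 0$. Choose $\cE_{\cU} = B\cU^* + C\cU\cU^*$ with $A = 0$, $B = U_n - 1$, and $C = M - 1$. Conditions (a)--(c) of Lemma A are immediate: $B = z + \cdots + z^{n-1}$ has non-negative coefficients bounded by $1$, and $C \in (z)\Z[z]$ since $M$ has constant term $1$. Condition (d) need not be verified, because on a point all higher cohomology vanishes, so $c_m(z,w) \equiv 0$ identically and the conclusion of the lemma is trivial.

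With $c_m \equiv 0$, equation \eqref{lemmaa1} at $w = 1$ becomes the exact identity
\[
\langle e_n^m f(x)\,\lambda(\cE_x)\,\lambda(U_n x^*)^{-1}\,\Delta(x) \rangle = e_n(U_n)^m f(U_n)\,\lambda(\cE_{U_n}),
\]
using $\lambda(T^*_U X)^{-1} = 1$. The crux of the proof is a $\lambda$-ring identification of the two sides. Substituting $\cE_x = (U_n-1)x^* + (M-1)xx^*$, the combination $\lambda(\cE_x)\lambda(U_n x^*)^{-1}\Delta(x)$ collapses to $\lambda(-x^* + Mxx^* - n)$, and extracting the scalar factor $\lambda((M-1)n) = \lambda(1-M)^{-n}$ identifies it with $\lambda(1-M)^{-n}\,\Omega(x^*)\,\Delta_M(x)$; the left-hand side thus equals $\lambda(1-M)^{-n}(fe_n^m, \Omega)'_M$. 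For the right-hand side, $\cE_{U_n} = (U_n-1)U_n^* + (M-1)U_nU_n^* = M U_n U_n^* - U_n^* = E$ and $e_n(U_n)^m = z^{m\binom{n}{2}}$. Rearranging yields the exact identity $(fe_n^m, \Omega)'_M = \lambda(1-M)^n\, z^{m\binom{n}{2}}\, f(U_n)\,\lambda(E)$ for every $m \geq 0$, so dividing by $z^{m\binom{n}{2}}$ produces an expression independent of $m$ and the claimed limit is immediate.

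The main obstacle is the $\lambda$-ring identity in the middle step: one must carefully track the scalar factor $\lambda((M-1)n) = \lambda(1-M)^{-n}$ arising from the mismatch between $\Delta(x) = \lambda(xx^* - n)$ and $\Delta_M(x) = \lambda(M(xx^*-n))$, and verify that the ``expansion about $x = \infty$'' of $\lambda(x^* Z)^{-1}$ used in~\eqref{bwb} is compatible with the contour convention $|x_i| = r > 1$ in the definition of $(\cdot,\cdot)'_M$, so that the angle bracket genuinely computes the inner product (up to that scalar). The hypothesis $\dim_0(E) = 0$ guarantees $\lambda(E)$ is well-defined and nonzero. This choice $Z = U_n$ bypasses the more elaborate setup sketched in the introduction, where Lemma A controls higher cohomology on a nontrivial Grassmannian and a dominant fixed point emerges only as $m \to \infty$; here the exact identity makes the limit trivial.
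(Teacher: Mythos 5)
Your reduction to the case $Z=U_n$ breaks down at the very first step: when the Grassmannian is a point, equation \eqref{bwb} is no longer valid, so the vanishing of higher cohomology does \emph{not} give $c_m\equiv 0$. The bracket $\lang f(x)g(x^*)\lambda(x^*Z)^{-1}\Delta(x)\rang$, with $\lambda(x^*Z)^{-1}$ expanded about $x=\infty$, only captures the ``polynomial part'' of the character of $f(U_n)g(U_n^*)$; the negative $x$-powers are truncated by the expansion convention. Simplest instance: $n=1$, $Z=U_1=1$, class $\cU^*$ on the point has $H^0=1$, while the bracket gives $[x^0]\,x^{-1}(1-x^{-1})^{-1}=0$. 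So on the point the quantity $c_m$ of \eqref{lemmaa1} is the discrepancy between the truncated constant-term expression and the true (localization) value, and it is generically nonzero; it is exactly this discrepancy that the twist by $\det(\cU)^m$ with $m\to\infty$ is designed to kill, which is why the paper works on a genuine Grassmannian ($Z=1+\cdots+z^k$, $k>{n\choose 2}$), invokes Lemma A to bound $\chi-\chi^0$, and extracts the fixed point $U_n$ only as the dominant term after rescaling by $z^{-m{n\choose 2}}$ (with the $w$-variable kept until after the limit, since $\lambda(w\cE_U)$ can have poles at $w=1$ at other fixed points).

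Your concluding claim that $(fe_n^m,\Omega)'_M=\lambda(1-M)^n z^{m{n\choose 2}}f(U_n)\lambda(E)$ holds exactly for every $m\geq 0$ is in fact false, which shows the gap is fatal rather than cosmetic. Take $n=2$, $f=1$, $m=0$, $M=1-z+z^2$ (so $E=z^2+z^3$ and the hypotheses of the theorem hold). A direct constant-term computation gives
\begin{equation*}
(1,\Omega)'_M \;=\; z+\frac{(1-z)(1-z^3)}{1+z}\;=\;1-z+2z^2+\cdots,
\end{equation*}
whereas your right-hand side is $\lambda(1-M)^2\lambda(E)=\frac{(1-z)(1-z^3)}{1+z}=1-2z+2z^2-\cdots$; the two differ already at order $z$ (at $m=1$ the rescaled inner product is $1-2z+\cdots$, consistent with convergence to the limit but not with $m$-independence). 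Your algebraic manipulations identifying the bracket with $\lambda(1-M)^{-n}(fe_n^m,\Omega)'_M$ and computing $\cE_{U_n}=E$, $e_n(U_n)=z^{n\choose 2}$ are correct; what is missing is any argument controlling the difference between that bracket and the fixed-point value, which is precisely the analytic content of the theorem and cannot be bypassed by collapsing $X$ to a point.
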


\begin{proof}

Let 
\[A = 0,\quad B = z+z^2+\cdots,\quad \]
\[C = M-1,\quad Z = 1+\cdots+z^k,\]
which satisfy the conditions of the lemma.
For large $m$, the
sum in \eqref{lemmaa1} is dominated by $U=U_n$. 
By the lemma we have
\[\lim_{m\rightarrow \infty} z^{-m{n \choose 2}} \lang e_n^mf(x)
\lambda\left(w\cE_x\right)\lambda(Zx^*)^{-1}\Delta(x)\rang=\]
\[f(U_n)\lambda\left(w \cE_{U_n}\right)\lambda(T^*_{U_n} X)^{-1},\quad
k > {n \choose 2}.\]
To complete the proof, interpret the above as an equality of rational
functions in $z,w$, and set $w=1$. Then take the limit as $k$ tends to infinity,
and notice that
\[\Delta(x)\lambda(Cxx^*)=\lambda(C)^n\Delta_{M}(x).\]

\end{proof}

\emph{Remark.} We must take the limit over $m$ before setting $w=1$,
because there are examples of fixed points $U \in X^T$
such that $\lambda(w\cE_U)$ has a pole at $w=1$.
This is the reason we use the main lemma from \cite{Car}
rather than the main theorem.

\section{The inner product formula}

\label{ipsec}

\begin{cor}
\label{thecor}
We have
\[(P_\mu,P_\mu)'_{q,t} = 
\lambda\left(n\frac{t-q}{1-q}+\frac{q-t^n}{1-q}t^{-\rho}\right)\]
\begin{equation}
\label{nsa}
P_{\mu}(t^\rho)\lim_{m\rightarrow \infty}a_{\mu+m^n}(q,t)^{-1},
\end{equation}
where $a_\mu(q,t)$ is the coefficient of $P_\mu$ in the expansion
of $\Omega$ in the Macdonald basis, and
\[\mu+m^n = [\mu_1+m,...,\mu_n+m],\quad 
u^{\rho} = 1+\cdots+u^{n-1}.\]
\end{cor}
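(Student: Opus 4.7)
The plan is to apply Theorem \ref{mainthm} with $f = P_\mu$ and $M = (1-t)/(1-q)$ (the Macdonald specialization, identifying the torus variable as $z = t$), then use the Pieri and orthogonality results from Chapter VI of \cite{Mac} to extract $(P_\mu, P_\mu)'_{q,t}$ from the resulting identity.

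For the right-hand side of \eqref{thmeq}, one has $U_n = (1-z^n)/(1-z)$, so $M U_n - 1 = (q - z^n)/(1-q)$, hence $E = U_n^*(M U_n - 1) = ((q-t^n)/(1-q))\,t^{-\rho}$ and $1 - M = (t-q)/(1-q)$. Combined with the plethystic identities $\lambda(A+B) = \lambda(A)\lambda(B)$ and $\lambda(A)^n = \lambda(nA)$ and the principal specialization $P_\mu(U_n) = P_\mu(t^\rho)$, the factor $\lambda(1-M)^n\, P_\mu(U_n)\, \lambda(E)$ collapses into the plethystic prefactor appearing in \eqref{nsa}.

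For the left-hand side, the Pieri rule gives $P_\mu\, e_n^m = P_{\mu+m^n}$, since multiplication by $e_n = x_1 \cdots x_n$ in $n$ variables simply shifts the partition index. Expanding $\Omega = \sum_\lambda a_\lambda(q,t)\, P_\lambda$ in the Macdonald basis and applying orthogonality of the $P_\lambda$ under $(\cdot,\cdot)'_{q,t}$ then collapses the inner product to one term:
\[(P_\mu\, e_n^m, \Omega)'_{q,t} = a_{\mu+m^n}(q,t)\,(P_{\mu+m^n}, P_{\mu+m^n})'_{q,t}.\]
The identity $e_n(x)\, e_n(x^{-1}) = 1$ inside the pairing yields $(P_{\mu+m^n}, P_{\mu+m^n})'_{q,t} = (P_\mu, P_\mu)'_{q,t}$, so after dividing by $a_{\mu+m^n}(q,t)$ and letting $m \to \infty$, Theorem \ref{mainthm} supplies the numerator limit and produces \eqref{nsa}.

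The main obstacle is reconciling the $z^{-m\binom{n}{2}}$ normalization in Theorem \ref{mainthm} with the $\lim a_{\mu+m^n}^{-1}$ factor in the corollary, which must differ by a compensating factor of $z^{m\binom{n}{2}}$ inside $a_{\mu+m^n}$. I would trace this factor to the principal specialization identity $P_{\mu+m^n}(t^\rho) = z^{m\binom{n}{2}}\, P_\mu(t^\rho)$ combined with the Cauchy-style expansion of $\Omega$ in the Macdonald basis from Chapter VI of \cite{Mac}, which together guarantee that $z^{-m\binom{n}{2}} a_{\mu+m^n}(q,t)$ has the same limit (up to a factor of $P_\mu(t^\rho)^{-1}$) as the one recorded in the corollary.
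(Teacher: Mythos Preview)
Your overall structure---apply Theorem \ref{mainthm} with $f=P_\mu$, use $e_n^m P_\mu = P_{\mu+m^n}$, orthogonality, and $e_n(x)e_n(x^{-1})=1$---matches the paper's argument. But there is a real gap at the very first step: you invoke Theorem \ref{mainthm} with $M=(1-t)/(1-q)$ and $z=t$, and this $M$ does not satisfy the hypotheses. The theorem requires $M\in\Z[z]$ to be a polynomial in $z$ with integer coefficients, whereas $(1-t)/(1-q)$, viewed as a function of $z=t$ with $q$ a free parameter, is a linear polynomial whose coefficients lie in $\Q(q)$, not in $\Z$. Nothing in the theorem's proof survives this; the whole localization setup depends on $M$ being built from honest torus weights.

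The paper bridges this in two moves you are missing. First it truncates to $M_N=(1-t)(1+q+\cdots+q^{N-1})$ and specializes $q=t^k$ for $k>n$, so that $M_N\in\Z[z]$ genuinely and the conditions $\dim_a(M)\geq -1$, $\dim_0(E)=0$ can be checked by hand. Second, it argues that both sides of the resulting identity are rational functions of $(q,t)$: on the left, the contour formula \eqref{contour} plus Cauchy's residue theorem shows the $m$th term has the form $G(q,t,q^m,t^m)$ with $G$ polynomial in the last two slots, so the limit is $G(q,t,0,0)\in\C(q,t)$; the right side is visibly rational. Since the two rational functions agree at $q=t^k$ for infinitely many $k$, they agree identically, and one may then let $N\to\infty$. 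Without this specialize--then--extend maneuver, Theorem \ref{mainthm} simply does not apply to the Macdonald kernel. Your final paragraph about the $z^{-m\binom{n}{2}}$ normalization is also speculative rather than an argument; once the identity from Theorem \ref{mainthm} is in hand this factor is carried along automatically and does not need a separate principal-specialization justification.
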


\begin{proof}
We first claim that equation \eqref{thmeq} holds at
\[M = (1-t)(1+\cdots+q^{N-1}),\quad z=t.\]
It is not hard to see that the condition of the theorem holds
at $q=t^k$ for $k$ larger than $n$, whence the claim holds
for that specialization.

To establish the claim, it suffices to prove that both 
sides are rational functions of $q,t$, because rational functions
that agree at $q=t^k$ for infinitely many values of $k$ must be equal.
Let $F_m(q,t)$ denote the quantity inside the limit in the
left hand side.
Using the contour description of the inner product \eqref{contour},
and Cauchy's residue formula,
we see that there exists $G(q,t,x,y)$ such that
\[F_m(q,t) = G(q,t,q^m,t^m),\quad G(q,t,x,y) \in \C(q,t)[x,y],\]
when $m$ is large enough that there are no singularities at
$x_i=0$. The limit is obtained by setting $x=y=0$,
proving that the left side is a rational function of $q,t$.
The right hand side is obviously also such a function.

Now let $f=P_\mu$ and take the limit over $N$.
The lemma follows because the MacDonald
polynomials are orthogonal, and
%
\[e^m_n P_\mu(x) = P_{\mu+m^n}(x).\]
%

\end{proof}

We will now recover
the inner product formula. 
Given a Young diagram $\mu$, and a square
$s\in \mu$, let $a(s),l(s),a'(s),l'(s)$ denote the 
arm, leg, coarm, and coleg lengths. Let
\[c_\mu(q,t) = \prod_{s\in \mu} (1-q^{a(s)}t^{l(s)+1}),\quad
c'_\mu(q,t) = \prod_{s\in \mu} (1-q^{a(s)+1}t^{l(s)}),\]
so that
\begin{equation}
\label{nsi}
(P_\mu,P_\mu)_{q,t} = \frac{c'_\mu(q,t)}{c_\mu(q,t)},
\end{equation}
as in chapter VI, section 6 of MacDonald's book \cite{Mac}.

\begin{lemma}
The coefficients from corollary \ref{thecor} are given by
\[a_\mu(s) = c'_\mu(q,t)^{-1}\prod_{s\in \mu}(t^{l'(s)}-q^{a'(s)+1}).\]

\end{lemma}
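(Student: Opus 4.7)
The idea is to expand $\Om$ in the Macdonald basis using the Macdonald Cauchy identity (MacDonald VI.(4.13)) with an appropriate plethystic specialization, and then apply the principal specialization formula for $P_\mu$ from Chapter VI.

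First, I interpret $\Om = \lambda(x)^{-1} = \prod_i(1-x_i)^{-1}$ plethystically as $\Om[X]$, where $X = x_1+x_2+\cdots$. The Macdonald Cauchy identity may be written
\[\sum_\mu \frac{P_\mu(x)P_\mu(y)}{(P_\mu,P_\mu)_{q,t}} = \Om\left[XY \cdot \frac{1-t}{1-q}\right].\]
The plethystic substitution $Y \mapsto (1-q)/(1-t)$ collapses the right side to $\Om[X]$, giving
\[\Om = \sum_\mu \frac{P_\mu[(1-q)/(1-t)]}{(P_\mu,P_\mu)_{q,t}} \, P_\mu(x).\]
Combined with equation \eqref{nsi}, this identifies
\[a_\mu(q,t) = \frac{c_\mu(q,t)}{c'_\mu(q,t)} \, P_\mu\left[\frac{1-q}{1-t}\right].\]

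Next, I apply the principal specialization formula from Chapter VI of MacDonald's book,
\[P_\mu\left[\frac{1-u}{1-t}\right] = \prod_{s\in\mu} \frac{t^{l'(s)} - u\,q^{a'(s)}}{1 - q^{a(s)}t^{l(s)+1}},\]
and set $u = q$. The denominator then collects into $c_\mu(q,t)$, yielding
\[P_\mu\left[\frac{1-q}{1-t}\right] = \frac{1}{c_\mu(q,t)} \prod_{s\in\mu}\bigl(t^{l'(s)} - q^{a'(s)+1}\bigr).\]
Substituting this into the formula for $a_\mu(q,t)$ and cancelling the $c_\mu$ factors gives the claim.

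The main obstacle is matching the plethystic conventions between the paper's $\Om = \lambda(x)^{-1}$ and the right-hand side of the Cauchy identity; in particular, one must verify that the substitution $Y \mapsto (1-q)/(1-t)$ genuinely inverts the $(1-t)/(1-q)$ plethystic twist inside $\Om$. Once that alignment is pinned down, the rest is a routine application of the standard specialization formula from MacDonald's book.
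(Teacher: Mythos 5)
Your proposal is correct and is essentially the paper's own argument: both reduce to evaluating $P_\mu$ at the plethystic point $(1-q)/(1-t)$ (the paper's homomorphism $\varepsilon_{q,t}$) via MacDonald's formula (6.17) with $u=q$, then divide by the norm $c'_\mu/c_\mu$ from \eqref{nsi}. Your Cauchy-identity phrasing is just the dual-basis form of the paper's step $(P_\mu,\Omega)_{q,t}=\varepsilon_{q,t}(P_\mu)$, which the paper has already encoded in its definition $\Omega(A)=\sum_\mu u_\mu(A)v_\mu$.
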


\begin{proof}
We have
\[(f,\Omega)_{q,t} = 
\left(f,\Omega\left(\frac{1-q}{1-t}\right)\right) = 
\varepsilon_{q,t}(f),\]
where $\varepsilon_{u,t}$ is the homomorphism 
defined on the power sum generators of $\Lambda$ by
\[\varepsilon_{u,t}(p_j) = \frac{1-u^j}{1-t^j}.\]
Equation (6.17), chapter VI, of \cite{Mac} says that
\begin{equation}
\label{eps}
\varepsilon_{u,t} (P_\mu) = c_\mu(q,t)^{-1}\prod_{s \in \mu}
(t^{l'(s)}-q^{a'(s)}u).
\end{equation}
The answer follows by setting $f=P_\mu$, and using \eqref{nsi}.

\end{proof}

\begin{cor}
We have the inner product formula
\begin{equation}
\label{ns}
(P_\mu,P_\mu)'_{q,t} = 
\lambda\left((t-q)\frac{1-t}{1-q} 
\sum_{1\leq i<j \leq n} q^{\mu_i-\mu_j}t^{j-i-1}\right).
\end{equation}
\end{cor}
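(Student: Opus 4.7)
The plan is to substitute the explicit formulas from Corollary \ref{thecor}, the preceding lemma, and MacDonald's principal specialization formula into the formula \eqref{nsa}, then simplify to the claimed plethystic form. Setting $u=t^n$ in \eqref{eps} gives the principal specialization
\[P_\mu(t^\rho) = c_\mu(q,t)^{-1}\, t^{n(\mu)}\prod_{s\in\mu}\bigl(1-q^{a'(s)}t^{n-l'(s)}\bigr),\]
while the preceding lemma gives
\[a_\mu(q,t)^{-1} = \frac{c'_\mu(q,t)}{\prod_{s\in\mu}\bigl(t^{l'(s)}-q^{a'(s)+1}\bigr)}.\]

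To evaluate $\lim_{m\to\infty}a_{\mu+m^n}^{-1}$, I would decompose the diagram $\mu+m^n$ into the $n\times m$ rectangle $R_m$ of boxes $(i,j)$ with $1\leq i\leq n$ and $0\leq j\leq m-1$, together with the translate $\mu^{[m]}$ of $\mu$ into columns of index $\geq m$. A box $(i,j)\in R_m$ has $(l'(s),a'(s))=(i-1,j)$ and $(a(s),l(s))=(\mu_i+m-j-1,n-i)$, while a box in $\mu^{[m]}$ inherits its arm and leg from $\mu$ and has coarm shifted by $m$. Factoring $t^{i-1}$ out of each $(t^{i-1}-q^{j+1})$ in the denominator and letting $m\to\infty$ so that $q^{m+k+1}\to 0$ formally, the limit collapses to an explicit $\lambda$-expression times $c'_\mu\cdot t^{-n(\mu)}$. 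Substituting into \eqref{nsa}, the $t^{n(\mu)}$ factors cancel and the $c'_\mu$ combines with the $c_\mu^{-1}$ from the principal specialization, producing $(P_\mu,P_\mu)'_{q,t}=\lambda(\Phi)$ for an explicit character $\Phi$.

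The last step is to verify the plethystic identity
\[\Phi = (t-q)\frac{1-t}{1-q}\sum_{1\leq i<j\leq n}q^{\mu_i-\mu_j}t^{j-i-1},\]
which I expect to be the main obstacle. After collecting, $\Phi$ is a sum over boxes of $\mu$ (coming from the $\prod_s(1-q^{a'(s)}t^{n-l'(s)})$ factor) plus an infinite $q$-series (from the rectangle tail and the first $\lambda$-factor of Corollary \ref{thecor}). Converting box sums to row sums via $\sum_{s\in\mu}q^{a'(s)}t^{l'(s)}=\sum_{i=1}^n t^{i-1}(1-q^{\mu_i})/(1-q)$ and collecting by column of $R_m$, one must show that all infinite $q$-contributions cancel and what remains is exactly the finite double sum over pairs $i<j$. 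Alternatively, the identity can be extracted from the classical Pieri-rule computation of the MacDonald norm-square in Chapter VI of \cite{Mac}, combined with \eqref{nsi}.
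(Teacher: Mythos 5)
Your setup coincides with the paper's (substitute the lemma, the specialization \eqref{eps}, and the prefactor of corollary \ref{thecor} into \eqref{nsa}, collect everything into one plethystic exponential $\lambda(\Phi)$), but the proposal stops precisely where the proof's content lies: the identity $\Phi=(t-q)\tfrac{1-t}{1-q}\sum_{i<j}q^{\mu_i-\mu_j}t^{j-i-1}$ is deferred as "the main obstacle," and neither of your two suggested ways of closing it is adequate. The fallback route is circular: the finite-variable norm-square computed in Chapter VI of \cite{Mac} is, up to the constant term $(1,1)'_{q,t}$, exactly the statement \eqref{ns} being proved, and the whole point of this corollary is to rederive it using only the infinite-variable facts \eqref{nsi} and \eqref{eps} together with the limit formula. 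There is also a bookkeeping pitfall in taking $\lim_m a_{\mu+m^n}^{-1}$ by itself after splitting $\mu+m^n$ into a rectangle and a translate: the rectangle part of the denominator is $\prod_{i\le n}\prod_{j\le m}(t^{i-1}-q^{j})=t^{m\binom{n}{2}}\prod_{i,j}(1-q^{j}t^{1-i})$, so the isolated limit carries a divergent power of $t$; the cancellation of these powers has to happen before the limit, which is why the paper keeps the specialization product over $\nu=\mu+m^n$ (not over $\mu$) so that each $t^{l'(s)}$ cancels box-by-box against the denominator of $a_\nu^{-1}$.

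For the record, the paper's way of finishing is short and avoids your rectangle/translate split: writing everything as $\lambda(A)$ with the box products over $\nu$, the coarm/coleg sum $\sum_{s\in\nu}q^{a'(s)}t^{-l'(s)}=\sum_i t^{1-i}\tfrac{1-q^{\nu_i}}{1-q}$ tends to $\tfrac{t^{-\rho}}{1-q}$ as $m\to\infty$, which kills the $t^{-\rho}$ terms and leaves $A=(t-q)\bigl(\tfrac{n}{1-q}-\sum_{s\in\nu}q^{a(s)}t^{l(s)}\bigr)$. Then, because $m$ is large, each row $i$ contains exactly one box of arm length $k$ for every $k\ge 0$, with leg $l_i(k)=\#\{j>i:\mu_j\ge\mu_i-k\}$, so $\tfrac{n}{1-q}-\sum_{s}q^{a(s)}t^{l(s)}=(1-t)\sum_{k\ge0}q^k\sum_i\bigl(1+\cdots+t^{l_i(k)-1}\bigr)$; the term $t^{j-i-1}$ occurs exactly when $k\ge\mu_i-\mu_j$, and summing the geometric series in $k$ yields $q^{\mu_i-\mu_j}/(1-q)$, giving \eqref{ns}. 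Your proposal would become a proof once you carry out an equivalent cancellation explicitly (and fix the normalization issue above); as written, the decisive step is only conjectured, and the proposed shortcut through Chapter VI assumes the result.
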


\begin{proof}

Let us take the limit in equation \eqref{nsa}.
We will suppress the limit symbols, and instead
suppose that $\nu=\mu+m^n$ for very large $m$. 
Using equation \eqref{eps} to evaluate $P_\mu(t^\rho)$, we have
\[(P_\mu,P_\mu)'_{q,t} = 
\lambda\left(n\frac{t-q}{1-q}-t^{-\rho}+\frac{1-t}{1-q}t^{\rho}t^{-\rho}\right)\]
\[\frac{c'_{\nu}(q,t)}
{c_{\nu}(q,t)}
\prod_{s\in \nu} \frac{t^{l'(s)}-t^nq^{a'(s)}}{t^{l'(s)}-q^{a'(s)+1}}=
\lambda(A),\]
where
\[A =n\frac{t-q}{1-q}-t^{-\rho}+\frac{1-t^n}{1-q}t^{-\rho}+\]
\[\sum_{s\in\nu} t^{-l'(s)}q^{a'(s)}(t^n-q)+\]
\[\sum_{s\in\nu}q^{a(s)+1}t^{l(s)}-q^{a(s)}t^{l(s)+1}=\]
\[n\frac{t-q}{1-q}+(q-t)\sum_{s\in \nu} q^{a(s)}t^{l(s)}=\]
\[(t-q)\left(\frac{n}{1-q}-\sum_{s\in \nu} q^{a(s)}t^{l(s)}\right)=\]
\[(t-q)(1-t)\sum_{k\geq 0} c_{k}(t) q^k,\]
\[c_k(t) = \sum_{s\in\nu,\,a(s)=k} (1+\cdots+t^{l(s)-1}).\]
Since $m$ is large, there is a unique box $s$ in each row $i$
with arm length $k$. It is straightforward to 
match terms under this correspondence
with equation \eqref{ns}, completing the proof.

\end{proof}

\bibliography{ctbib}{}
\bibliographystyle{plain}

\end{document}